\newtheorem{thm}{Theorem}
\newtheorem*{thm*}{Theorem}
\newtheorem{lem}{Lemma}
\newtheorem{prop}{Proposition}
\newtheorem{rem}{Remark}
\newcommand{\ninf}{n\rightarrow\infty}
\renewcommand{\Re}{\text{Re}}
\DeclareMathOperator{\C}{\mathbb{C}}
\renewcommand{\l}{\left}
\renewcommand{\r}{\right}
\newcommand{\sumn}{\sum\limits_{n=1}^{\infty}}
\newcommand{\tr}{\text{tr}}
\newcommand{\discr}{\text{discr}}
\begin{document}

\title[One model in critical hyperbolic situation]{Spectral analysis of a class of hermitian Jacobi
matrices in a critical (double root) hyperbolic case.}

\author{Serguei Naboko}

\address{Department of Mathematical Physics, Institute of Physics, St. Petersburg University,
Ulianovskaia 1, St. Petergoff, St. Petersburg, 198904}
\email{naboko@snoopy.phys.spbu.ru}

\thanks{This work was supported by the INTAS  grant no. 05-1000008-7883 and partially by the grant RFBR-06-01-00249.}

\author{Sergey Simonov}

\address{Department of Mathematical Physics, Institute of Physics, St. Petersburg University,
Ulianovskaia 1, St. Petergoff, St. Petersburg, 198904}
\email{sergey\_simonov@mail.ru}

\subjclass{47B36} \keywords{Jacobi matrices, Subordinacy theory,
Asymptotics of generalized eigenvectors}

\date{}

\begin{abstract}
We consider a class of Jacobi matrices with periodically modulated
diagonal in a critical hyperbolic ("double root") situation. For
the model with "non-smooth" matrix entries we obtain the
asymptotics of generalized eigenvectors and analyze the spectrum.
In addition, we reformulate a very helpful theorem from
\cite{Janas-Moszynski} in its full generality in order to serve
the needs of our method.
\end{abstract}

\maketitle

\section{Introduction}
In the present paper, we consider a class of Jacobi matrices with
periodically modulated growing diagonal giving an example of the
"double root" problem. This means a critical situation for
asymptotics of the generalized eigenvectors related to the matrix.
Such a situation particularly arises in the spectral phase
transition phenomenon. If the matrix depends on some parameters,
the decomposition of its spectrum into different types (absolutely
continuous, singular continuous, pure point, discrete) may be
independent of these parameters. But if the structure of this
decomposition changes under a variation of the parameters, a
spectral phase transition occurs. If this change happens by a
jump, such a phenomenon is called a spectral phase transition of
the first type, whereas if this change is smooth with the change
of the parameters when they move across some hyper-surface in a
space, it is called a spectral phase transition of the second
type. The transition in types of spectrum is tightly related to
the change of form of asymptotics of generalized eigenvectors due
to subordinacy theory (Gilbert-Pearson \cite{Gilbert-Pearson}),
which was generalized to the case of Jacobi matrices in
\cite{Khan-Pearson}.

Last decade the spectral analysis of Jacobi matrices attracted the
attention of many specialists in operator theory and mathematical
physics.

We consider the Jacobi matrix $J$ with diagonal entries $b_n$:
\begin{equation*}
    b_n=\l\{
    \begin{array}{c}
      bn^{\alpha} \text{ for odd values of }n\\
      0 \text{ for even values of }n \\
    \end{array}
    \r.
\end{equation*}
(where $\alpha$ and $b$ are real parameters $\alpha\in(\frac23;1)$
and $b\ne0$) and off-diagonal entries (weights)
$$a_n=n^{\alpha}.$$ The model demonstrates the situation of the
spectral phase transition of the first order corresponding to the
"moment of transition" exactly.  We are interested in asymptotics
of generalized eigenvectors, i.e., solutions of the spectral
recurrence relation
\begin{equation}\label{spectral_equation}
    a_{n-1}u_{n-1}+b_nu_n+a_nu_{n+1}=\lambda u_n,\ n\geq2.
\end{equation}
This model has been studied in \cite{Damanik-Naboko} and in
particular the following result was obtained: for $\lambda<0$
there are two solutions $u^+_n$ and $u^-_n$ of
\eqref{spectral_equation} with the following asymptotics as
$\ninf$:
\begin{equation*}
    \begin{array}{l}
      u_{2n}^\pm\sim(-1)^{n}n^{-\frac{\alpha}4}\exp\l(\pm\sqrt{\frac{b\lambda}{2^{\alpha}}}\frac{n^{1-\frac{\alpha}{2}}}{1-\frac{\alpha}{2}}\r)\\
      u_{2n+1}^\pm\sim\pm\sqrt{\frac{\lambda}{2^{\alpha}b}}\l(1-\frac{\alpha}2\r)(-1)^{n}n^{-\frac{3\alpha}4}\exp\l(\pm\sqrt{\frac{b\lambda}{2^{\alpha}}}\frac{n^{1-\frac{\alpha}{2}}}{1-\frac{\alpha}{2}}\r)\\
    \end{array}
\end{equation*}
The problem of determining the asymptotics for $\lambda>0$ was
stated, and in the present paper, we show that the answer has the
same form. However this interesting question is not the main
concern of this paper. The principal difficulty in our analysis is
that the situation is "critical hyperbolic" unlike the "critical
elliptic" situation in \cite{Damanik-Naboko}, which roughly means
that exponents in the answer grow and decay if $\lambda>0$ and
oscillate if $\lambda<0$.

Let us explain the problem of the critical situation in more
detail. Whenever one deals with the three-term recurrence relation
\eqref{spectral_equation}, it is often useful to write it in the
vector form introducing the sequence $\overrightarrow{u}_n:=
\left(%
\begin{array}{c}
  u_{n-1} \\
  u_n \\
\end{array}%
\right)$ and the transfer-matrix $B_n:=
\left(%
\begin{array}{cc}
  0 & 1 \\
  -\frac{a_{n-1}}{a_n} & \frac{\lambda-b_n}{a_n} \\
\end{array}%
\right)$. So \eqref{spectral_equation} is equivalent to the
discrete linear system in $\mathbb{C}^2$:
\begin{equation}\label{vector form}
    \overrightarrow{u}_{n+1}=B_n\overrightarrow{u}_n,\ n\geq2.
\end{equation}
The solution for such a system is obtained by taking the
chronological product of transfer matrices,
$\l(\prod\limits_{k=2}^{n}B_k\r)
\overrightarrow{u}_2=B_n\cdot...\cdot B_2 \overrightarrow{u}_2$.
The analysis becomes much easier if the matrix of the system is in
some sense smooth-in-$n$ (say, has a limit and asymptotic
expansion in inverse powers of $n$ as $n\rightarrow\infty$). In
our case the transfer-matrix is not smooth in this sense because
the coefficients of the spectral equation "jump" all the time.
This is the first (simple) problem that we face and it may be
solved by taking the product of two consecutive transfer-matrices,
introducing the new linear system with the sequence of the
coefficient matrices
\begin{equation}\label{matrix_M}
M_n:=B_{2n}B_{2n-1}
\end{equation}
which turns out to be smooth-in-$n$. It is easy to see that the
sequence $M_n$ has a limit
$M:=\lim\limits_{n\rightarrow\infty}M_n$ with $\det M=1$. The are
three possibilities for the eigenvalues of $M$: they can be
\begin{itemize}
    \item unimodular and complex conjugate (the elliptic
    situation),
    \item real and different (the hyperbolic situation; hence
    one of them is greater than $1$ and another less than $1$
    in absolute value), or
    \item coincide and equal $1$ or $-1$ (the critical situation=the double root
    case).
\end{itemize}
In the hyperbolic situation solutions are supposed to grow or
decay, in the elliptic situation they are supposed to oscillate
having similar behavior of their norms $\|\overrightarrow{u}_n\|$.
The numerous variants of analogues to Levinson Theorem (also known
as Bernzaid-Lutz Theorem \cite{Bernzaid-Lutz}) for differential
linear systems \cite{Coddington-Levinson} can be applied in these
two cases, cf. \cite{Janas-Moszynski}, \cite{Silva uniform}.

In the critical situation one can also distinguish the "critical
elliptic" and the "critical hyperbolic" cases. This separation
depends on the lower orders behavior of $M_n$ as $\ninf$, namely
on the asymptotic sign of the discriminant ($\discr M_n:=(\tr
M_n)^2-4\det M_n$) of matrices $M_n$. In the critical case, the
matrix $M$ is similar to the Jordan block, with powers
\begin{equation*}
\left(%
\begin{array}{cc}
  1 & 1 \\
  0 & 1 \\
\end{array}%
\right)^n
=\left(%
\begin{array}{cc}
  1 & n \\
  0 & 1 \\
\end{array}%
\right),
\end{equation*}
and the difficulty here is not that the powers of the matrix grow
with $n$ (as they do in the hyperbolic situation), but the fact
that the large entry is off-diagonal. This mixes upper and lower
components of the solution, which makes the system unstable and
sensitive to small perturbations. The problem of the "critical
hyperbolic" situation was considered for smooth matrix elements in
\cite{JN-Sheronova}. In the present paper, considering a model of
Jacobi matrix with "oscillating" diagonal, we intend to simplify
greatly the scheme of successive transformations of the matrix
system which was used there by making it more general and
transparent (Section \ref{calculations}). This approach differs
from that of \cite{Damanik-Naboko} for "critical elliptic"
situation, because in our case we have to deal with growing
exponents (see Step 3 on page \pageref{step3}). Moreover, we
consider in the appendix a theorem from \cite{Janas-Moszynski}
which is necessary for the final step of our method. The problem
is that the original formulation of this theorem states
asymptotics of only one (principal) solution. Although it is
surely the most difficult part of the problem considered, we want
to state the result in its full generality.

The method of the present paper works for a much wider class of
Jacobi matrices. However our goal is to present a simple
formulation of the method and show by means of an example how it
works.

A similar problem (the critical hyperbolic situation) was
considered in \cite{Janas} for a "smooth" model where the author
used a completely different method related to Kelley's paper
\cite{Kelley}.

\section{Preliminaries}
As usual, the operator $J$ in the Hilbert space $l^2(\mathbb{N})$
is first defined (as $\mathcal{J}$) on the linear set of vectors
which have only a finite number of non-zero components,
$l_{fin}(\mathbb{N})$, by the rule
\begin{equation*}
    \begin{array}{l}
      (\mathcal{J}u)_1=a_1u_1+b_1u_2 \\
      (\mathcal{J}u)_n=b_{n-1}u_{n-1}+a_nu_n+b_nu_{n+1},\text{ for }n\geq2. \\
    \end{array}
\end{equation*}
Then its closure $J=\overline{\mathcal{J}}$ is a self-adjoint
operator provided the Carleman condition
$\sum\limits_{n=0}^{\infty}\frac1{a_n}=+\infty$ \cite{Berezanskii}
is satisfied. In the standard basis $\{e_n\}_{n=1}^{\infty}$
(where $e_n$ is the vector with all the components zeros except
the $n$-th) the operator $J$ admits the following matrix
representation:
\begin{equation*}
    \left(%
    \begin{array}{cccc}
    b_1 & a_1 & 0 & \cdots \\
    a_1 & b_2 & a_2 & \cdots \\
    0 & a_2 & b_3 & \cdots \\
    \vdots & \vdots & \vdots & \ddots \\
    \end{array}%
    \right).
\end{equation*}

We start with the system
\begin{equation}\label{main_system}
\left(
    \begin{array}{c}
    u_{2n} \\
    u_{2n+1}
    \end{array}
\right)=M_n \left(
    \begin{array}{c}
    u_{2n-2} \\
    u_{2n-1}
    \end{array}
\right)
\end{equation}
(see \eqref{vector form}, \eqref{matrix_M}). One can obtain a
solution of this system directly, by taking a product of matrices
$M_n$: $\left(
    \begin{array}{c}
    u_{2n} \\
    u_{2n+1}
    \end{array}
\right)=\l[\prod\limits_{k=2}^nM_k\r] \left(
    \begin{array}{c}
    u_{2} \\
    u_{3}
    \end{array}
\right)$. Matrices $M_n$ are:
\begin{multline*}
    M_n=B_{2n}B_{2n-1}
    \\
        =
        \left(
            \begin{array}{cc}
            -1 & -b \\
            0 & -1
            \end{array}
        \right)
        +\frac{\lambda}{(2n)^\alpha}
        \left(
            \begin{array}{rc}
            0 & 1 \\
            -1 & -b
            \end{array}
        \right)
        +\frac{\alpha}{2n}I+O\l(\frac1{n^{2\alpha}}\r)
\end{multline*}
having smooth-in-$n$ asymptotics as $\ninf$.

The eigenvalues of the limit matrix $M:=
\left(%
\begin{array}{cc}
  -1 & -b \\
  0 & -1 \\
\end{array}%
\right)$ coincide, so the situation is critical (the double root
case). By an easy calculation, one can see that discriminants of
matrices $M_n$ equal $\discr
M_n=\frac{4b\lambda}{(2n)^{\alpha}}+O\l(\frac1n\r)$, hence indeed
$\lambda<0$ corresponds to the elliptic situation and $\lambda>0$
corresponds to the hyperbolic situation (both critical).

Our method is based upon a sequence of transformations which are
determined by some anzats. Similar consideration for the anzats
constructing may be also found in \cite{JN-Sheronova}.

\begin{rem}
    In what follows, we use transformations that are in fact discrete analogues to variation of parameters method transformations.
    Whenever one deals with the product of matrices, say $A_n$, one can as well consider
    matrices $C_n=T_{n+1}^{-1}A_nT_n$ (where the matrix sequence $T_n$ is chosen
    somehow; we will call matrices $T_n$ affinity-like). Due to the
    cancellation of intermediate terms the product of matrices
    $A_n$ equals
    \begin{equation*}
        \prod_{n=n_1}^{n_2}A_n=\prod_{n=n_1}^{n_2}(T_{n+1}C_nT_n^{-1})=T_{n_2+1}\l(\prod_{n=n_1}^{n_2}C_n\r)T_{n_1}^{-1}.
    \end{equation*}
    So the study of the linear difference system with coefficient matrices $A_n$
    can be completely reduced to the study of the linear difference
    system with coefficient matrices $C_n$.
\end{rem}

\section{Calculation of the asymptotics in the hyperbolic case.}\label{calculations}
In this section, we proceed through several transformations in
order to simplify the problem and finally obtain the system which
can be treated with Janas-Moszynski Theorem (which is properly
adjusted in the appendix). So we divide this section into four
steps. In fact we have already made a "zero" step in the first
section, which is reduction to the smooth matrix system by taking
the product of two transfer-matrices. But we do not include this
step into Section \ref{calculations} because the double root
problem actually arises only at this stage. Moreover after
grouping the transfer-matrices by pairs one does not need to
perform any inverse transformation in order to obtain the answer.

\underline{Step 1: Reduction of the meaningful part of $M_n$ to
the transfer-}
\\\underline{matrix form.}

We write the spectral equation in matrix form to settle the
problem of periodically modulated coefficients by grouping
transfer-matrices in pairs. But now it is simpler to consider a
smooth three-term recurrence relation which is equivalent to the
system. The transfer-matrix corresponding to the three-term
recurrence relation should have entries $0$ and $1$ in the upper
row. In this step, we find the proper transformation which makes
the coefficient matrix of the system resemble a transfer-matrix.
This transformation is generated by the matrix
sequence $T_n$ of the form
\begin{equation*}
    T_n:=(-1)^n\l[
    \left(
        \begin{array}{rr}
        1 & -b \\
        1 & 0
        \end{array}
    \right)
    +\frac{\lambda}{(2n)^\alpha}
    \left(
        \begin{array}{cc}
        b+\frac{1}{2b} & 0 \\
        \frac{1}{2b} & -\frac12
        \end{array}
    \right)
    +\frac{\alpha}{2n}
    \left(
        \begin{array}{rc}
        0 & 0 \\
        -1 & b
        \end{array}
    \right)\r]
    .
\end{equation*}
The transformation
\begin{equation}\label{N_n}
    N_n:=T_{n+1}M_n T_n^{-1}
\end{equation}
gives
\begin{equation*}
    N_n=
    \left(
        \begin{array}{rc}
        0 & 1 \\
        -1 & 2
        \end{array}
    \right)
    +\frac{b\lambda}{(2n)^\alpha}
    \left(
        \begin{array}{cc}
        0 & 0 \\
        0 & 1
        \end{array}
    \right)
    +\frac{\alpha}{n}
    \left(
        \begin{array}{cr}
        0 & 0 \\
        1 & -1
        \end{array}
    \right)
    +O\l(\frac1{n^{2\alpha}}\r).
\end{equation*}
The meaningful part (or the part which we expect to be meaningful
- the sum of first few terms) is now a matrix with the upper row
of the form $0$, $1$. The exact form of matrices $T_n$ can be
determined from this requirement (but the answer is not unique of
course). Indeed, try to find $T_n$ in the form
\begin{equation}\label{T_n}
T_n=(-1)^n\l[
\left(%
\begin{array}{cc}
  1 & -b \\
  1 & 0 \\
\end{array}%
\right)+\frac{\lambda}{(2n)^{\alpha}}T^{(1)}+\frac{\alpha}{2n}T^{(2)}\r],
\end{equation}
with unknown matrices $T^{(1)}$ and
$T^{(2)}$ independent of $n$. Denote $T:=\left(%
\begin{array}{cc}
  1 & -b \\
  1 & 0 \\
\end{array}%
\right)$ which is chosen in order to satisfy
$$
-T\left(%
\begin{array}{cc}
  -1 & -b \\
  0 & -1 \\
\end{array}%
\right)T^{-1}=\left(%
\begin{array}{cc}
  0 & 1 \\
  -1 & 2 \\
\end{array}%
\right) =:N.
$$
By substitution of $T_n$ in the form \eqref{T_n} into the relation
$T_{n+1}M_n=N_nT_n$, looking at the terms of orders
$\frac1{n^\alpha}$ and $\frac1{n}$ one obtains the following
linear conditions on the matrices $T^{(1)}$ and $T^{(2)}$:
\begin{equation*}
\begin{array}{l}
    [T^{(1)}T^{-1},
    \left(%
        \begin{array}{cc}
        0 & 1 \\
        -1 & 2 \\
        \end{array}%
    \right)
    ]=T
    \left(%
        \begin{array}{cc}
        0 & 1 \\
        -1 & -b \\
        \end{array}%
    \right)
    T^{-1}+
    \left(%
        \begin{array}{cc}
        0 & 0\\
        * & * \\
        \end{array}%
    \right)
\\{}[T^{(2)}T^{-1},
    \left(%
        \begin{array}{cc}
        0 & 1 \\
        -1 & 2 \\
        \end{array}%
    \right)
    ]=I+
    \left(%
        \begin{array}{cc}
        0 & 0\\
        * & * \\
        \end{array}%
    \right).
\end{array}
\end{equation*}
By stars we denote matrix entries which are allowed to be
non-zero. Therefore the problem
may be reduced to the following one. To prove for the commutator
equation
\begin{equation*}
    [X,
    \left(%
        \begin{array}{cc}
        0 & 1 \\
        -1 & 2 \\
        \end{array}%
    \right)]=
    \left(%
        \begin{array}{cc}
        f_1 & f_2 \\
        x_1 & x_2 \\
        \end{array}%
    \right),
\end{equation*}
that for any given values $f_1$ and $f_2$ there exist unique
values $x_1$ and $x_2$ and $2\times2$ matrix $X$ (obviously not
unique) which satisfy the equation. Multiplying the equality on
the left by the matrix
$\left(%
\begin{array}{cc}
  1 & 0 \\
  -1 & 1 \\
\end{array}%
\right)$ and on the right by
$\left(%
\begin{array}{cc}
  1 & 0 \\
  1 & 1 \\
\end{array}%
\right)$ (its inverse) we obtain another form of the commutator
equation:
\begin{multline*}
    [Y,
    \left(%
        \begin{array}{cc}
        1 & 1 \\
        0 & 1 \\
        \end{array}%
    \right)]=[Y,
    \left(%
        \begin{array}{cc}
        0 & 1 \\
        0 & 0 \\
        \end{array}%
    \right)]
    \\
        =\left(%
            \begin{array}{cc}
            -y_3 & y_1-y_4 \\
            0 & y_3 \\
            \end{array}%
        \right)=
        \left(%
            \begin{array}{cc}
            f_1+f_2 & f_2 \\
            x_1+x_2-f_1-f_2 & x_2-f_2 \\
            \end{array}%
        \right).
\end{multline*}
We denoted
$Y:=\left(%
\begin{array}{cc}
  y_1 & y_2 \\
  y_3 & y_4 \\
\end{array}%
\right):=
\left(%
\begin{array}{cc}
  1 & 0 \\
  -1 & 1 \\
\end{array}%
\right)X
\left(%
\begin{array}{cc}
  1 & 0 \\
  1 & 1 \\
\end{array}%
\right)$. It follows now that $x_2=-f_1$ and $x_1=2f_1+f_2$ are
uniquely determined by $f_1$ and $f_2$, and the matrix $Y$ exists
and is unique up to $c_1I+c_2
\left(%
\begin{array}{cc}
  0 & 1 \\
  0 & 0 \\
\end{array}%
\right)$ with any $c_1$ and $c_2$.

Therefore one can reduce the original system \eqref{main_system}
to the new one, where $M_n$ are replaced by $N_n$. Note that the
exact form of the matrices $T^{(1)}$ and $T^{(2)}$ is not
essential for the result of transformation. We need only to prove
the existence of such matrices.

\underline{Step 2: Reduction of the main term to the identity
matrix.}

The structure of the main part of the system allows to write
\begin{equation*}
    N_n=
    \left(%
        \begin{array}{cc}
        0 & 1 \\
        -F_2(n) & -F_1(n) \\
        \end{array}%
    \right)
    +O\l(\frac1{n^{2\alpha}}\r),
\end{equation*}
where the error term is a $2\times2$ matrix which norm is
$O(n^{-2\alpha})$ and the matrix entries $F_1(n)$ and $F_1(n)$ are
(denote $B:=\sqrt{\frac{b\lambda}{2^{\alpha}}}$)
\begin{equation*}
    F_1(n)=-2-\frac{B^2}{n^{\alpha}}+\frac{\alpha}n,\ F_2(n)=1-\frac{\alpha}n.
\end{equation*}
Our goal is to use the fact that the system
\begin{equation*}
    \left(%
        \begin{array}{c}
        v_{n+1} \\
        w_{n+1} \\
        \end{array}%
    \right)
    =
    \left(%
        \begin{array}{cc}
        0 & 1 \\
        -F_2(n) & -F_1(n) \\
        \end{array}%
    \right)
    \left(%
        \begin{array}{c}
        v_n \\
        w_n \\
        \end{array}%
    \right)
\end{equation*}
where the remainder is omitted, is \emph{equivalent} to the
three-term recurrence relation
\begin{equation}\label{recurrence ralation}
    u_{n+1}+F_1(n)u_n+F_2(n)u_{n-1}=0.
\end{equation}
The latter has two "approximate solutions" of the form
\begin{equation*}
    z^{\pm}_n=n^\gamma e^{\pm An^\delta},
\end{equation*}
with $\gamma=-\frac{\alpha}{4}$, $\delta=1-\frac{\alpha}{2}$,
$A=\frac{B}{\delta}$. This means that
\begin{equation}\label{approximate_solution}
    z^{\pm}_{n+1}+F_1(n)z^{\pm}_n+F_2(n)z^{\pm}_{n-1}=O(n^{-2\alpha})z^{\pm}_n
\end{equation}
and can be verified by a direct calculation. The form of these
"approximate solutions" can be obtained, for instance, by analogy
with the WKB method (see \cite{JN-Sheronova} for that type of
argument). Having this structure of the "solution" one can
determine unknown values of $\gamma$, $\delta$ and $A$ from the
condition of cancellation of all decreasing terms in
\eqref{approximate_solution} up to the order $O(n^{-2\alpha})$.
Equation \eqref{approximate_solution} implies that
\begin{multline}\label{vector_form}
    \left(%
        \begin{array}{c}
        z^{\pm}_n \\
        z^{\pm}_{n+1} \\
        \end{array}%
    \right)
    =
    \l(
    \left(%
        \begin{array}{cc}
        0 & 1 \\
        -F_2(n) & -F_1(n) \\
        \end{array}%
    \right)
    +
    \left(%
        \begin{array}{cc}
        0 & 0 \\
        0 & O(n^{-2\alpha}) \\
        \end{array}%
    \right)\r)
    \left(%
        \begin{array}{c}
        z^{\pm}_{n-1} \\
        z^{\pm}_n \\
        \end{array}%
    \right)
    \\
        =(N_n+O(n^{-2\alpha}))
        \left(%
        \begin{array}{c}
        z^{\pm}_{n-1} \\
        z^{\pm}_n \\
        \end{array}%
    \right).
\end{multline}
It is useful now to write the last vector equality in a matrix
form. Denote
\begin{equation*}
    S_n=
    \left(
        \begin{array}{cc}
        z^-_{n-1} & z^+_{n-1} \\
        z^-_n & z^+_n
        \end{array}
    \right),
\end{equation*}
then combining \eqref{vector_form} for both signs one has
\begin{equation*}
    S_{n+1}=(N_n+O(n^{-2\alpha}))S_n
\end{equation*}
and hence
\begin{equation*}
    S_{n+1}^{-1}N_nS_n=I+S_{n+1}^{-1}O(n^{-2\alpha})S_n.
\end{equation*}
This actually follows \emph{only} from the fact that $z_n^{\pm}$
are "approximate solutions" of the recurrence relation
\eqref{recurrence ralation}, i.e., from
\eqref{approximate_solution}.

Substituting the expression for $z^{\pm}_n=n^\gamma e^{\pm
An^\delta}$ into the term
$$
S_{n+1}^{-1}O(n^{-2\alpha})S_n=\frac1{\det S_{n+1}}
    \left(%
        \begin{array}{cc}
        z_n^+z_n^-O(n^{-2\alpha}) & z_n^{+2}O(n^{-2\alpha}) \\
        z_n^{-2}O(n^{-2\alpha}) & z_n^+z_n^-O(n^{-2\alpha}) \\
        \end{array}%
    \right)
$$
and using the fact that $\det S_{n+1}\sim2A\delta n^{-\alpha}$ as
$\ninf$ (for calculations, cf. \cite{Damanik-Naboko}; one can also
find this rate of decay from the fact of modified wronskian
persistence), one obtains
\begin{equation}\label{bad_matrix}
    S_{n+1}^{-1}N_nS_{n}=I+\frac1{n^{3\alpha/2}}
    \left(%
        \begin{array}{cc}
        O(1) & e^{2An^{\delta}}O(1) \\
        e^{-2An^{\delta}}O(1) & O(1) \\
        \end{array}%
    \right)=:K_n.
\end{equation}

These calculations enable us to reduce the original system to the
system with the coefficient matrix $K_n$, for which the "main
term" is identity matrix. The problem is that it contains
exponentially increasing anti-diagonal terms. In the next step we
show how to overcome this difficulty due to the symmetric
cancellation of both anti-diagonal entries.

\begin{rem}
The above calculations demonstrate a simple "geometrical" approach
based on reduction to the tree-term recurrence relation
\eqref{recurrence ralation}. Another approach (a simplification of
calculations from \cite{JN-Sheronova}) may be rather
straightforward: the "geometrical construction" is replaced by an
explicit calculation. Since $\det
S_{n+1}=z_n^-z_{n+1}^+-z_n^+z_{n+1}^-$, the substitution of
matrices $S_n$ gives:
\begin{multline*}
    S_{n+1}^{-1}
    \left(%
        \begin{array}{cc}
        0 & 1 \\
        -F_2(n) & -F_1(n) \\
        \end{array}%
    \right) S_n=\frac1{z_n^-z_{n+1}^+-z_n^+z_{n+1}^-}
    \\
        \times\left(%
            \begin{array}{cc}
            z_n^-z_{n+1}^++z_n^+(F_1z_n^-+F_2z_{n-1}^-) & z_n^+(z_{n+1}^++F_1z_n^++F_2z_{n-1}^+) \\
            -z_n^-(z_{n+1}^-+F_1z_n^-+F_2z_{n-1}^-) & -z_n^+z_{n+1}^--z_n^-(F_1z_n^++F_2z_{n-1}^+) \\
            \end{array}%
        \right).
\end{multline*}
After adding and subtracting terms $z_n^+z_{n+1}^-$ in the
upper-left entry and $z_n^-z_{n+1}^+$ in the lower-right entry for
extracting the determinant, the last expression becomes the
following one:
\begin{multline*}
    I+\frac1{z_n^-z_{n+1}^+-z_n^+z_{n+1}^-}
    \\
        \times\left(%
            \begin{array}{cc}
            z_n^+(z_{n+1}^-+F_1z_n^-+F_2z_{n-1}^-) & z_n^+(z_{n+1}^++F_1z_n^++F_2z_{n-1}^+) \\
            -z_n^-(z_{n+1}^-+F_1z_n^-+F_2z_{n-1}^-) & -z_n^-(z_{n+1}^-+F_1z_n^++F_2z_{n-1}^+) \\
            \end{array}%
        \right).
\end{multline*}
It is remarkable that the expressions
$z_{n+1}^{\pm}+F_1z_n^{\pm}+F_2z_{n-1}^{\pm}$ appear in each
matrix entry of the second term. Now substituting
\eqref{approximate_solution} and taking into consideration that
$\det S_{n+1}\sim2A\delta n^{-\alpha}$, one obtains the same
expression as in \eqref{bad_matrix}:
\begin{multline*}
    I+\frac1{\det S_{n+1}}
    \left(%
        \begin{array}{cc}
        z_n^+z_n^-O(n^{-2\alpha}) & z_n^{+2}O(n^{-2\alpha}) \\
        z_n^{-2}O(n^{-2\alpha}) & z_n^+z_n^-O(n^{-2\alpha}) \\
        \end{array}%
    \right)
    \\
        =I+\frac1{n^{3\alpha/2}}
        \left(%
            \begin{array}{cc}
            O(1) & e^{2An^{\delta}}O(1) \\
            e^{-2An^{\delta}}O(1) & O(1) \\
            \end{array}%
        \right).
\end{multline*}
\end{rem}
The problem now is the growing exponent in the upper-right entry
of the matrix, which leaves no chance to regard the remainder
small in any sense. This exponent can be compensated by the
decaying one in the lower-left entry as we show below.

\underline{Step 3: Elimination of exponentially increasing
off-diagonal terms.}\label{step3}

In order to produce the elimination we perform yet another
transformation with affinity-like matrix
\begin{equation*}
    X_n=
    \left(%
    \begin{array}{cc}
    e^{2An^{\delta}} & 0 \\
    0 & 1 \\
    \end{array}%
    \right),
\end{equation*}
which yields
\begin{multline}\label{last_matrix}
    L_n:=X_{n+1}^{-1}K_nX_n=
    \left(%
        \begin{array}{cc}
        e^{2A(n^{\delta}-(n+1)^{\delta})} & 0 \\
        0 & 1 \\
        \end{array}%
    \right)
    +O\l(\frac1{n^{3\alpha/2}}\r)
    \\
        =\left(%
            \begin{array}{cc}
            1-\frac{2A\delta}{n^{\alpha/2}}+\frac{(2A\delta)^2}{2n^{\alpha}} & 0 \\
            0 & 1 \\
            \end{array}%
        \right)+O\l(\frac1{n^{3\alpha/2}}\r).
\end{multline}

As a result of all the transformations, the original system is
reduced to a new one with coefficient matrices $L_n$ for which
Janas-Moszynski Theorem \cite{Janas-Moszynski} (Theorem \ref{JM
lemma}, Appendix) is applicable. Take
$p_n=\frac{2A\delta}{n^{\alpha/2}}$, $V_n\equiv V=
\left(%
\begin{array}{cc}
  -1 & 0 \\
  0 & 0 \\
\end{array}%
\right)$ and let $R_n=O(n^{-3\alpha/2})$ be the matrix remainder
which belongs to $l^1$ for $\alpha>2/3$. The theorem asserts that
the system with coefficient matrices $L_n$ has a basis of
solutions of the form
$e^{-2An^{\delta}}\l(\overrightarrow{e}_1+o(1)\r)$ and
$\overrightarrow{e}_2+o(1)$ (we use notations
$\overrightarrow{e}_1=
\left(%
    \begin{array}{c}
    1 \\
    0 \\
    \end{array}%
\right)$, $\overrightarrow{e}_2=\left(%
    \begin{array}{c}
    1 \\
    0 \\
    \end{array}%
\right)$).

\begin{rem}
Note that we need to apply Theorem \ref{JM lemma} only to ignore
the remainder term of order $O(n^{-3\alpha/2})$. Without it, the
system generated by matrix \eqref{last_matrix} obviously has two
exact solutions $e^{-2An^{\delta}}\overrightarrow{e}_1$ and
$\overrightarrow{e}_2$. However this is not entirely obvious,
since the eigenvalues of the limit matrix $\lim\limits_{\ninf}L_n$
coincide.
\end{rem}

\underline{Step 4: Returning to the original system.}

Returning to the system with coefficient matrices $M_n$ we recall
steps 1, 2 and 3. We should take into account the affinity-like
matrices that stay in front of the product of matrices $L_n$.
Since $$M_k=T_{k+1}^{-1}S_{k+1}X_{k+1}L_kX_k^{-1}S_k^{-1}T_k,$$
one has (for every $\lambda$ we choose a number $n_0$ such that
$\det T_n\neq0$ for $n\ge n_0$):
\begin{multline*}
    \left(%
        \begin{array}{c}
        u_{2n} \\
        u_{2n+1} \\
        \end{array}%
    \right)=
    \l(\prod_{k=n_0}^{n}M_k\r)
    \left(%
        \begin{array}{c}
        u_{n_0} \\
        u_{n_0+1} \\
        \end{array}%
    \right)
    \\
        =(T_{n+1}^{-1}S_{n+1}X_{n+1})\l(\prod_{k=n_0}^{n}L_k\r)X_{n_0}^{-1}S_{n_0}^{-1}T_{n_0}
        \left(%
            \begin{array}{c}
            u_{n_0} \\
            u_{n_0+1} \\
            \end{array}%
        \right)
        \\
            =(-1)^{n+1}
            \left(%
                \begin{array}{cc}
                z_n^+(1+o(1)) & z_n^+(1+o(1)) \\
                -\frac{A\delta z_n^+}{bn^{\alpha/2}}(1+o(1)) & \frac{A\delta z_n^+}{bn^{\alpha/2}}(1+o(1)) \\
                \end{array}%
            \right)
            \\
                \times\l(\prod_{k=n_0}^{n}L_k\r)
                X_{n_0}^{-1}S_{n_0}^{-1}T_{n_0}
                \left(%
                    \begin{array}{c}
                    u_{n_0} \\
                    u_{n_0+1} \\
                    \end{array}%
                \right).
\end{multline*}

As one has from the previous step, the vector
\\$\l(\prod\limits_{k=n_0}^{n}L_k\r)X_{n_0}^{-1}S_{n_0}^{-1}T_{n_0}
\left(%
    \begin{array}{c}
    u_{n_0} \\
    u_{n_0+1} \\
    \end{array}%
\right)$ is a linear combination of $e^{-2An^{\delta}}(
\overrightarrow{e}_1+o(1))$ and $(
\overrightarrow{e}_2+o(1))$. Therefore the vector $\left(%
\begin{array}{c}
  u_{2n} \\
  u_{2n+1} \\
\end{array}%
\right)$ is a linear combination of $(-1)^n
\left(%
    \begin{array}{c}
    z_n^{\pm}(1+o(1)) \\
    \pm\frac{A\delta z_n^{\pm}}{bn^{\alpha/2}}(1+o(1)) \\
    \end{array}%
\right)$. So the following result holds true.

\begin{thm}
    For any $\lambda>0$ spectral equation \eqref{spectral_equation} has a basis of
    solutions $u^+_n$ and $u^-_n$ with asymptotics as $\ninf$ of even
    components
    $$
    u_{2n}^\pm\sim
    (-1)^{n}n^{-\frac{\alpha}4}\exp\l(\pm\sqrt{\frac{b\lambda}{2^{\alpha}}}
    \frac{n^{1-\frac{\alpha}{2}}}{1-\frac{\alpha}{2}}\r)
    $$
    and asymptotics of odd components
    $$
    u_{2n+1}^\pm\sim\pm\sqrt{\frac{\lambda}{2^{\alpha}b}}\l(1-\frac{\alpha}2\r)(-1)^{n}n^{-\frac{3\alpha}4}\exp
    \l(\pm\sqrt{\frac{b\lambda}{2^{\alpha}}}\frac{n^{1-\frac{\alpha}{2}}}{1-\frac{\alpha}{2}}\r).
    $$
\end{thm}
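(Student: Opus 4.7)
The plan is to chain together the four steps already set up in Section \ref{calculations} and read the asymptotics of $(u_{2n},u_{2n+1})^T$ off the resulting factorization of $\prod_{k=n_0}^n M_k$. From $N_n=T_{n+1}M_nT_n^{-1}$, $S_{n+1}^{-1}N_nS_n=K_n$ and $L_n=X_{n+1}^{-1}K_nX_n$ we get the single identity $M_k=T_{k+1}^{-1}S_{k+1}X_{k+1}\,L_k\,X_k^{-1}S_k^{-1}T_k$. When multiplied from $k=n_0$ to $k=n$, all intermediate factors $X_k^{-1}S_k^{-1}T_k\cdot T_{k+1}^{-1}S_{k+1}X_{k+1}$ collapse and we are left with a clean prefactor $T_{n+1}^{-1}S_{n+1}X_{n+1}$, the product $\prod_{k=n_0}^{n}L_k$, and a fixed (and harmless) initial factor applied to the initial data $(u_{n_0},u_{n_0+1})^T$.

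Next I would apply the Janas--Moszynski theorem (Theorem \ref{JM lemma} in the Appendix) to the $L_n$-system, taking $p_n=2A\delta/n^{\alpha/2}$, $V_n\equiv V=\bigl(\begin{smallmatrix}-1&0\\0&0\end{smallmatrix}\bigr)$, and noting that the remainder $R_n=O(n^{-3\alpha/2})$ lies in $\ell^1(\mathbb{N})$ exactly because $\alpha>2/3$. The theorem then gives a basis of solutions of the $L$-system of the form $e^{-2An^{\delta}}(\overrightarrow{e}_1+o(1))$ and $\overrightarrow{e}_2+o(1)$. Pushing these two basis vectors through the prefactor $T_{n+1}^{-1}S_{n+1}X_{n+1}$ produces two linearly independent solutions of the original system \eqref{main_system}, and arbitrary solutions of \eqref{spectral_equation} are recovered as their linear combinations.

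To obtain the asymptotics explicitly, I would expand the prefactor $T_{n+1}^{-1}S_{n+1}X_{n+1}$ to leading order. Since the leading part of $T_n$ is $(-1)^n T$ with $T=\bigl(\begin{smallmatrix}1&-b\\1&0\end{smallmatrix}\bigr)$, the columns of $S_{n+1}$ are $(z_n^{\pm},z_{n+1}^{\pm})^T$, and $X_{n+1}=\mathrm{diag}(e^{2A(n+1)^\delta},1)$, a direct computation (using $z_{n+1}^{\pm}/z_n^{\pm}=1\pm An^{-\alpha/2}\delta+O(n^{-\alpha})$ and $A\delta=B$) yields the asymptotic form of the prefactor quoted in Step~4. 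The column that is multiplied by the $L$-system solution $e^{-2An^\delta}(\vec e_1+o(1))$ then uses the crucial identity $z_n^+\,e^{-2An^\delta}=z_n^-$ to convert the growing $z_n^+$ entries into $z_n^-$, producing the decaying solution; the column multiplied by $\vec e_2+o(1)$ directly yields the growing $z_n^+$-solution. Substituting $z_n^\pm=n^{-\alpha/4}\exp(\pm An^\delta)$, $A=B/\delta$, $B=\sqrt{b\lambda/2^\alpha}$ and $\delta=1-\alpha/2$ then gives exactly the two displayed asymptotic formulae.

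The only real obstacle is the bookkeeping in this last step: both columns of the leading part of $T_{n+1}^{-1}S_{n+1}X_{n+1}$ contain a factor of $z_n^+$, so the $z_n^-$-asymptotics in $u^{-}$ emerges only after the compensating exponential $e^{-2An^\delta}$ carried by the first $L$-system basis solution is absorbed. One has to verify that the $1+o(1)$ corrections in the prefactor do not spoil this cancellation, and that the fixed initial factor $X_{n_0}^{-1}S_{n_0}^{-1}T_{n_0}$ is invertible for a suitably chosen $n_0$, so that the two $L$-system basis vectors really do produce an independent basis of solutions of \eqref{spectral_equation}; both points follow from $\det S_n\sim 2A\delta n^{-\alpha}\neq0$ and $\det T_{n_0}\neq0$ for large $n_0$.
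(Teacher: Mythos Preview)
Your proposal is correct and follows the paper's own argument essentially step for step: you chain the three affinity-like transformations into $M_k=T_{k+1}^{-1}S_{k+1}X_{k+1}L_kX_k^{-1}S_k^{-1}T_k$, apply Theorem~\ref{JM lemma} to the $L$-system with exactly the choices $p_n=2A\delta n^{-\alpha/2}$, $V=\left(\begin{smallmatrix}-1&0\\0&0\end{smallmatrix}\right)$, $R_n\in l^1$ (using $\alpha>2/3$), and then read off the asymptotics from the prefactor $T_{n+1}^{-1}S_{n+1}X_{n+1}$ acting on the two $L$-basis vectors. Your explicit remarks on the cancellation $z_n^+e^{-2An^\delta}=z_n^-$ and on the invertibility of the initial factor for large $n_0$ are precisely the bookkeeping points the paper handles in Step~4.
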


Regarding the operator $J$ this means (due to the subordinacy
theory of Gilbert and Pearson \cite{Gilbert-Pearson}
\cite{Khan-Pearson}), that the spectrum of the operator on the
positive semiaxis is of pure point type. Moreover, if
$\lambda\in\sigma_p(J)$, then the solution $u_n^-$ is an
eigenvector of $J$.

Note that the asymptotics of $u_n^{\pm}$ as $\ninf$ for
$\lambda>0$ formally coincide with the ones obtained in
\cite{Damanik-Naboko} for $\lambda<0$ as it was mentioned above.

\section{Appendix: Janas-Moszynski Theorem revisited.}
Let us turn our attention to a theorem from
\cite{Janas-Moszynski}, which was not formulated in its full
generality there. By using a complicated technique, the existence
of the principal ("smaller") solution for some system was proved.
But the existence of the second ("larger") solution was not
stated. In what follows we prove this fact.

\begin{rem}
For the sequences $\{a_n\}$, in order to avoid any non-essential
problems related to vanishing of elements of the sequence, we
assume the notation for the product $\prod\limits_{n=1}^{N} a_n$
meaning $\prod\limits_{n:\,a_n\ne0} a_n$. We also use the notation
$l^1$ and $D^1$ for matrix sequences, i.e., the sequence of
matrices $\{M_n\}_{n=1}^{\infty}$ belongs to
\begin{itemize}
    \item $l^1$, iff $\sum\limits_{n=1}^{\infty}\|M_n\|<\infty$,
    \item $D^1$, iff $\sum\limits_{n=1}^{\infty}\|M_{n+1}-M_n\|<\infty$.
\end{itemize}
\end{rem}

Consider the linear difference system in $\C^2$
\begin{equation}\label{system}
    \left(%
        \begin{array}{c}
        u_{n+1} \\
        v_{n+1} \\
        \end{array}%
    \right)
    =
    (I+p_nV_n+R_n)
    \left(%
        \begin{array}{c}
        u_n \\
        v_n \\
        \end{array}%
    \right).
\end{equation}
Suppose it is non-degenerate, i.e., $\det(I+p_nV_n+R_n)\ne0$ for
every $n$ (this means that the system has two linearly independent
solutions).

\begin{thm}\label{JM lemma}(see \cite{Janas-Moszynski} where the the essential part of the theorem
was proved).

    Let
    \begin{itemize}
        \item $p_n\rightarrow0$ be a \emph{positive} sequence such that $\sum\limits_{n=1}^{\infty}p_n=+\infty$,
        \item $\{R_n\}\in l^1$ be a $2\times2$ matrix sequence,
        \item $\{V_n\}\in D^1$ be a \emph{real} $2\times2$ matrix sequence with
        $\rm{discr}\l(\lim\limits_{\ninf}V_n\r)\ne0$(i.e.,
        $\lim\limits_{\ninf}V_n$ has two different eigenvalues).
    \end{itemize}
    Then the system \eqref{system} has a basis of solutions
    $\overrightarrow{u}^{(1)}_n$ and $\overrightarrow{u}^{(2)}_n$ with
    the following asymptotics as $\ninf$:
    \begin{equation*}
        \overrightarrow{u}_n^{(1,2)}=\l(\prod\limits_{k=1}^n[1+p_k\mu_{1,2}(k)]\r)(\overrightarrow{x}_{1,2}+o(1)),
    \end{equation*}
    where $\mu_1$ and $\mu_2$ ($\Re\ \mu_1\leq\Re\ \mu_2$) are the
    eigenvalues of matrix $V:=\lim\limits_{\ninf}V_n$, $\overrightarrow{x}_1$ and
    $\overrightarrow{x}_2$ are the corresponding eigenvectors, $\mu_1(n)$
    and $\mu_2(n)$ are the eigenvalues of matrices $V_n$ (chosen in
    a way such that $\mu_1(n)\rightarrow\mu_1$ and
    $\mu_2(n)\rightarrow\mu_2$ as $\ninf$).
\end{thm}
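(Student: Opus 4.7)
The plan is to first diagonalize $V_n$ by a $D^1$-family of similarity transformations, invoke the part of the theorem already proved in \cite{Janas-Moszynski} for the ``smaller'' solution, and then construct the ``larger'' solution by a coupled forward/backward fixed-point argument exploiting the dichotomy $\Re\mu_1\le\Re\mu_2$ together with $p_n>0$.

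Since $V:=\lim V_n$ has distinct eigenvalues and $\{V_n\}\in D^1$, for $n$ large enough $V_n$ is smoothly diagonalizable: one can choose invertible matrices $\{W_n\}\in D^1$ with $W_n^{-1}V_nW_n=D_n:=\mathrm{diag}(\mu_1(n),\mu_2(n))$. Substituting $\overrightarrow{u}_n=W_n\overrightarrow{z}_n$ and expanding $W_{n+1}^{-1}(I+p_nV_n+R_n)W_n$ via $W_{n+1}^{-1}=W_n^{-1}+O(\|W_{n+1}-W_n\|)$, one obtains
\begin{equation*}
    \overrightarrow{z}_{n+1}=(I+p_nD_n+\hat R_n)\overrightarrow{z}_n,\qquad \{\hat R_n\}\in l^1,
\end{equation*}
because $\{R_n\}$ and $\{W_{n+1}-W_n\}$ are both in $l^1$ and $p_n$ is bounded. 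So I may assume $V_n=D_n$ from the outset.

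The smaller solution $\overrightarrow{z}_n^{(1)}=\Pi_n^{(1)}(\overrightarrow{e}_1+o(1))$, with $\Pi_n^{(j)}:=\prod_{k=1}^{n-1}(1+p_k\mu_j(k))$, is precisely the statement from \cite{Janas-Moszynski}. For the larger one I would use the ansatz $\overrightarrow{z}_n^{(2)}=\Pi_n^{(2)}(\overrightarrow{e}_2+\overrightarrow{\eta}_n)$ with $\overrightarrow{\eta}_n\to0$. Substitution yields
\begin{equation*}
    \overrightarrow{\eta}_{n+1}=\hat A_n\overrightarrow{e}_2+(I+\hat A_n)\overrightarrow{\eta}_n,\qquad \hat A_n:=\frac{I+p_nD_n+\hat R_n}{1+p_n\mu_2(n)}-I,
\end{equation*}
whose entries satisfy $\hat A_n^{ij}\in l^1$ for $(i,j)\ne(1,1)$, while $1+\hat A_n^{11}=(1+p_n\mu_1(n))/(1+p_n\mu_2(n))+O(\|\hat R_n\|)$. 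The crucial dichotomy is $|\Pi_n^{(1)}/\Pi_n^{(2)}|\le C$, which follows from $\Re\mu_1\le\Re\mu_2$ and $p_n>0$ and ensures that the partial products $\prod_{j=k}^{n-1}(1+\hat A_j^{11})$ stay uniformly bounded in $k,n$.

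I would close the argument by a contraction fixed point in the space of bounded sequences tending to $0$: solve for $\eta_n^1$ by forward summation against the bounded partial products, using the $l^1$ source $\hat A_k^{12}(1+\eta_k^2)$, and for $\eta_n^2$ by backward summation from $\infty$ (pure summable-perturbation). For $N$ large enough so that $\sum_{n\ge N}\|\hat R_n\|$ is small, the induced map is a contraction on a small ball around zero and produces the unique $\overrightarrow{\eta}$. The main obstacle is establishing uniform boundedness of the partial products of $1+\hat A_n^{11}$ in the borderline case $\Re\mu_1=\Re\mu_2$ (forced in the real-$V$ case with $\mu_2=\overline{\mu_1}$), where $|1+\hat A_n^{11}|=1$ to leading order: here one must extract cancellations from the $D^1$ regularity of $V_n$ together with $\sum p_n=+\infty$ via a discrete Abel-summation of the oscillating product, essentially the same mechanism that powers the smaller-solution proof in \cite{Janas-Moszynski}.
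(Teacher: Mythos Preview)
Your diagonalization step and the invocation of \cite{Janas-Moszynski} for the smaller solution match the paper exactly. For the larger solution, however, the paper takes a considerably more elementary route than your contraction fixed point. After diagonalizing, the paper makes one further normalization: it divides through by $1+p_n\mu_2(n)$, bringing the system to the special form
\[
\overrightarrow{z}_{n+1}=\l(I-\l(\begin{array}{cc}\widetilde p_n&0\\0&0\end{array}\r)+\widetilde R_n\r)\overrightarrow{z}_n,\qquad \widetilde p_n\sim p_n(\mu_2-\mu_1)>0,\quad \{\widetilde R_n\}\in l^1.
\]
The key observation is then almost trivial: since $0<\widetilde p_n<1$ for large $n$, the diagonal part has operator norm exactly $1$, hence $\l\|\prod_k(I-\mathrm{diag}(\widetilde p_k,0)+\widetilde R_k)\r\|\le\prod_k(1+\|\widetilde R_k\|)<\infty$ and \emph{every} solution is uniformly bounded. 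One now simply takes the solution with initial value $\overrightarrow e_2$, writes the explicit variation-of-parameters formula, and passes to the limit $n\to\infty$ by dominated convergence using $\prod_{l=k+1}^n(1-\widetilde p_l)\to0$ (from $\sum\widetilde p_n=\infty$) together with the uniform bound and $\{\widetilde R_n\}\in l^1$. The limit is $\overrightarrow e_2$ plus a tail sum that can be made small by starting late enough. No contraction mapping and no forward/backward splitting is needed; the argument is a direct computation of a few lines. Your approach would also work, but it reproduces machinery (the dichotomy estimate on partial products) that the a priori boundedness makes unnecessary.

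Your concern about the borderline case $\Re\mu_1=\Re\mu_2$ is also misplaced here: for real $V_n$ that is precisely the elliptic case $\discr V<0$, which the paper disposes of in one sentence by observing that the original Janas--Moszy\'nski theorem already supplies \emph{both} solutions (the two products have equal modulus, so neither is subordinate). The Abel-summation mechanism you sketch is therefore not needed for this statement.
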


\begin{proof}
The elliptic case of $\discr V<0$ (a relatively straightforward
one) is a special case of Janas-Moszynski Theorem proved in
\cite{Janas-Moszynski}. In the hyperbolic case of $\discr V>0$,
the existence of the "smaller" solution
$\overrightarrow{u}_n^{(1)}$ is also guaranteed by the named
theorem. So we need to prove only the existence of the second
("larger") solution $\overrightarrow{u}_n^{(2)}$ (the solution
corresponding to the eigenvalue $\mu_2$ with the largest real
part) in hyperbolic case. We emphasize that we do not give a new
proof of the result from \cite{Janas-Moszynski}, but only add one
extra (and rather simple) assertion to it.

Let us reduce the situation to its simpler subcase, i.e., the
system of the special form:
\begin{equation}\label{simple system}
    \left(%
        \begin{array}{c}
        u_{n+1} \\
        v_{n+1} \\
        \end{array}%
    \right)
    =
    \l(I-
    \left(%
        \begin{array}{cc}
        p_n & 0 \\
        0 & 0 \\
        \end{array}%
    \right)
    +R_n\r)
    \left(%
        \begin{array}{c}
        u_n \\
        v_n \\
        \end{array}%
    \right).
\end{equation}
For such a system, $V_n\equiv V=
\left(%
\begin{array}{cc}
  -1 & 0 \\
  0 & 0 \\
\end{array}%
\right)$.

\begin{rem}
    In fact we deal with a system of this type in Section
    \ref{calculations}, Step 3.
\end{rem}

The following statement holds:

\begin{lem}\label{new lem}
    Let
    \begin{itemize}
        \item $p_n\rightarrow0$ be a positive for sufficiently large $n$ sequence such that $\sumn p_n=+\infty$.
        \item $\{R_n\}\in l^1$ be a $2\times2$ matrix sequence.
    \end{itemize}
    Then there exists a solution to the system \eqref{simple
    system}of the form
    $$
    \left(%
        \begin{array}{c}
        u_n \\
        v_n \\
        \end{array}%
    \right)
    =\overrightarrow{e}_2+o(1)
    ,\
    \overrightarrow{e}_2=
    \left(%
        \begin{array}{c}
        0 \\
        1 \\
        \end{array}%
    \right)
    $$
    as $n\rightarrow\infty$.
\end{lem}

\begin{proof}
Without loss of generality one can assume that $0<p_n<1$ for every
$n$. Then for any two natural numbers $n_1<n_2$,
\begin{multline*}
    \l\|\prod_{n=n_1}^{n_2}\l[I-
    \left(%
        \begin{array}{cc}
        p_n & 0 \\
        0 & 0 \\
        \end{array}%
    \right)
    +R_n\r]\r\|\le
    \prod_{n=n_1}^{n_2}\l[\l\|I-
    \left(%
        \begin{array}{cc}
        p_n & 0 \\
        0 & 0 \\
        \end{array}%
    \right)
    \r\|
    +\|R_n\|\r]
    \\
        \le\prod_{n=n_1}^{n_2}[1+\|R_n\|]<\infty.
\end{multline*}
Therefore, every solution of the system \eqref{simple system} is
bounded and there exists a universal constant $C$ such that for
any natural numbers $n_1<n_2$ and any solution $u_n$
\begin{equation}\label{eq solutions boundedness}
    \l\|\l(
    \begin{array}{c}
      u_{n_2} \\
      v_{n_2} \\
    \end{array}
    \r)\r\|
    <C
    \l\|\l(
    \begin{array}{c}
      u_{n_1} \\
      v_{n_1} \\
    \end{array}
    \r)\r\|.
\end{equation}
Using the variation of parameters method one can rewrite the
system \eqref{simple system} in the following way:
\begin{multline}\label{variation}
    \left(%
        \begin{array}{c}
        u_{n+1} \\
        v_{n+1} \\
        \end{array}%
    \right)
    =\left(%
        \begin{array}{cc}
        \prod\limits_{k=n_0}^{n}(1-p_k) & 0 \\
        0 & 1 \\
        \end{array}%
    \right)
    \left(%
        \begin{array}{c}
        u_{n_0} \\
        v_{n_0} \\
        \end{array}%
    \right)
    \\
        + \sum\limits_{k=n_0}^{n}
        \left(%
            \begin{array}{cc}
            \prod\limits_{l=k+1}^n(1-p_l) & 0 \\
            0 & 1 \\
            \end{array}%
        \right) R_k
        \left(%
            \begin{array}{c}
            u_k \\
            v_k \\
            \end{array}%
        \right).
\end{multline}
The equivalence of the two systems \eqref{system} and
\eqref{variation} follows from elementary calculations. Let us
take
\begin{equation*}
    \left(%
        \begin{array}{c}
        u_{n_0} \\
        v_{n_0} \\
        \end{array}%
    \right)
    =
    \left(%
        \begin{array}{c}
        0 \\
        1 \\
        \end{array}%
    \right),
\end{equation*}
then
\begin{equation*}
    \left(%
        \begin{array}{c}
        u_{n+1} \\
        v_{n+1} \\
        \end{array}%
    \right)
    =
    \left(%
        \begin{array}{c}
        0 \\
        1 \\
        \end{array}%
    \right)
    + \sum\limits_{k=n_0}^{n}
    \left(%
        \begin{array}{cc}
        \prod\limits_{l=k+1}^n(1-p_l) & 0 \\
        0 & 1 \\
        \end{array}%
    \right) R_k
    \left(%
        \begin{array}{c}
        u_k \\
        v_k \\
        \end{array}%
    \right).
\end{equation*}
Note that $\prod\limits_{l=k+1}^n(1-p_l)\rightarrow0$ as $\ninf$
for every $k$ due to properties of the sequence $\{p_n\}$. Then
since $\l\|\l(
    \begin{array}{c}
      u_{n} \\
      v_{n} \\
    \end{array}
    \r)\r\|
<C$ for $n\ge n_0$ and $\{R_n\}\in l^1$, one has by Weierstrass
Theorem:
\begin{equation*}
    \sum\limits_{k=n_0}^{n}
    \left(%
        \begin{array}{cc}
        \prod\limits_{l=k+1}^n(1-p_l) & 0 \\
        0 & 1 \\
        \end{array}%
    \right) R_k
    \left(%
        \begin{array}{c}
        u_k \\
        v_k \\
        \end{array}%
    \right)
    \rightarrow
    \sum\limits_{k=n_0}^{\infty}
    \left(%
        \begin{array}{cc}
        0 & 0 \\
        0 & 1 \\
        \end{array}%
    \right) R_k
    \left(%
        \begin{array}{c}
        u_k \\
        v_k \\
        \end{array}%
    \right).
\end{equation*}
Hence,
\begin{equation*}
    \left(%
        \begin{array}{c}
        u_{n} \\
        v_{n} \\
        \end{array}%
    \right)
    \rightarrow
    \left(%
        \begin{array}{c}
        0 \\
        1 \\
        \end{array}%
    \right)
    +
    \sum\limits_{k=n_0}^{\infty}
    \left(%
        \begin{array}{cc}
        0 & 0 \\
        0 & 1 \\
        \end{array}%
    \right) R_k
    \left(%
        \begin{array}{c}
        u_k \\
        v_k \\
        \end{array}%
    \right)
\end{equation*}
as $\ninf$. The second component of the limit vector in the last
expression is non-zero provided that $n_0$ is sufficiently large
(due to \eqref{eq solutions boundedness} and the second condition
of Lemma \ref{new lem}).
\end{proof}

Reduction of system \eqref{system} to the special case of system
\eqref{simple system} can be done following the standard strategy
\cite{Coddington-Levinson}, \cite{Janas-Moszynski} by using the
fact of $D^1$-diagonalizability of every $D^1$ sequence of
matrices with invertible limit \cite{JM2} (also \cite[Lemma
1.3]{Janas-Moszynski}, this is a discrete version of the result
from \cite{Coddington-Levinson}):

\begin{prop}\cite{JM2}\label{diagonalizabilty_lemma}
    Let $\{V_n\}$ be a complex $2\times2$ matrix sequence such that $\{V_n\}\in D^1$ and $\rm{discr}\l(\lim\limits_{\ninf}V_n\r)\neq0$.
    Then the sequence $\{V_n\}$ is $D^1$-diagonalizable, i.e., there exists such a matrix sequence $\{T_n\}\in
    D^1$ with invertible limit that for sufficiently large values of
    $n$,
    \begin{equation*}
        V_n=T_n
        \left(%
            \begin{array}{cc}
            \mu_1(n) & 0 \\
            0 & \mu_2(n) \\
            \end{array}%
        \right) T_n^{-1}.
    \end{equation*}
\end{prop}

Let us return to the proof of Theorem \ref{JM lemma}. By
Proposition \ref{diagonalizabilty_lemma}, for $n$ sufficiently
large the corresponding matrices $T_n$ diagonalize $V_n$. To avoid
tedious notations, without loss of generality we will do all the
calculations starting with $n=1$. An explicit calculation shows:
\begin{multline*}
    T_{n+1}^{-1}(I+p_nV_n+R_n)T_n
    \\
        =(T_{n+1}^{-1}T_n)
        \l(I+p_n
        \left(%
            \begin{array}{cc}
            \mu_1(n) & 0 \\
            0 & \mu_2(n) \\
            \end{array}%
        \right)\r)+
        T_{n+1}^{-1}R_nT_n
        \\
            =I+p_n
            \left(%
                \begin{array}{cc}
                \mu_1(n) & 0 \\
                0 & \mu_2(n) \\
                \end{array}%
            \right)
            +Q_n,
\end{multline*}
where
\begin{equation*}
    Q_n:=
    T_{n+1}^{-1}(T_{n}-T_{n+1})
    \l[I+p_n
    \left(%
        \begin{array}{cc}
        \mu_1(n) & 0 \\
        0 & \mu_2(n) \\
        \end{array}%
    \right)\r]+T_{n+1}^{-1}R_nT_n.
\end{equation*}
Further,
\begin{multline*}
    \prod_{k=1}^{n}(I+p_kV_k+R_k)
        \\
            =T_{n+1}\l(\prod_{k=1}^{n}
            \l[I+p_k
            \left(%
                \begin{array}{cc}
                \mu_1(k) & 0 \\
                0 & \mu_2(k) \\
                \end{array}%
            \right)
            +Q_k\r]\r)T_1^{-1}
            \\
                =\l(
                \prod_{k=1}^{n}
                (1+p_k\mu_2(k))
                \r)
                T_{n+1}\l(
                \prod_{k=1}^{n}
                \l[I-
                \left(%
                    \begin{array}{cc}
                    \widetilde{p}_k & 0 \\
                    0 & 0 \\
                    \end{array}%
                \right)
                +\widetilde{R}_k\r]\r)T_1,
\end{multline*}
where
\begin{equation*}
    \widetilde{p}_n:=p_n\frac{\mu_2(n)-\mu_1(n)}{1+p_n\mu_2(n)},\ \widetilde{R}_n:=\frac{1}{1+p_n\mu_2(n)}Q_n.
\end{equation*}
The properties of the sequence $\{T_n\}$ guarantee that
$\{Q_n\}\in l^1$, as well as $\{\widetilde{R}_n\}\in l^1$.
Obviously $\widetilde{p}_n>0$ for large values of $n$, so the
system
\begin{equation*}
    \left(%
        \begin{array}{c}
        u_{n+1} \\
        v_{n+1} \\
        \end{array}%
    \right)
    =
    \l(I-
    \left(%
        \begin{array}{cc}
        \widetilde{p}_n & 0 \\
        0 & 0 \\
        \end{array}%
    \right)
    +\widetilde{R}_n\r)
    \left(%
        \begin{array}{c}
        u_{n} \\
        v_{n} \\
        \end{array}%
    \right)
\end{equation*}
satisfies all the conditions of Lemma \ref{new lem}. Therefore it has a solution $\left(%
\begin{array}{c}
  u_n \\
  v_n \\
\end{array}%
\right)=\overrightarrow{e}_2+o(1)$. Let
$T:=\lim\limits_{\ninf}T_n$. One has:
$T\overrightarrow{e}_2=\overrightarrow{x}_2$ is an eigenvector of
the matrix $V$ corresponding to the eigenvalue $\mu_2$. Then the
system \eqref{system} has a solution equal to
\begin{equation*}
    \l(\prod_{k=1}^{n}[1+p_k\mu_2(k)]\r)(\overrightarrow{x}_2+o(1))=:\overrightarrow{u}_n^{(2)},
\end{equation*}
which completes the proof.
\end{proof}

\subsection*{Acknowledgements}
The authors wish to express their gratitude to Dr. Marco Marletta
for his useful remarks and to the referee for reading of the
manuscript and making many important remarks.

\end{document}